\newtheorem{thm}{Theorem}
\theoremstyle{plain}
\newtheorem{cor}[thm]{Corollary}
\newtheorem{prop}[thm]{Proposition}
\theoremstyle{remark}
\newtheorem*{acknowledgements}{Acknowledgements}
\newcommand{\BH}{{\mathbb{H}}}
\newcommand{\BN}{{\mathbb{N}}}
\newcommand{\BR}{{\mathbb{R}}}
\newcommand{\BZ}{{\mathbb{Z}}}
\newcommand{\CB}{{\mathcal B}}
\newcommand{\CI}{{\mathcal I}}
\newcommand{\CK}{{\mathcal K}}
\newcommand{\vol}{\mathop{\rm vol}\nolimits}
\newcommand{\vect}[1]{{\boldsymbol{#1}}}
\renewcommand{\mod}{\mathop{\rm mod}\nolimits}
\newcommand{\SL}[1]{\mathop{\rm SL}_{#1} \nolimits}
\newcommand{\tr}{\mathop{\rm tr}\nolimits}
\newcommand{\nr}{\mathop{\rm nr}\nolimits}
\newcommand{\quotient}[2]{
        \mathchoice
            {
                \text{\raise1ex\hbox{$#1$}\Big/\lower1ex\hbox{$#2$}}%
            }
            {
                #1\,/\,#2
            }
            {
                #1\,/\,#2
            }
            {
                #1\,/\,#2
            }
    }
\newcommand{\lquotient}[2]{
        \mathchoice
            {
                \text{\lower1ex\hbox{$#1$}\Big \backslash \raise01ex\hbox{$#2$}}%
            }
            {
                #1\,\backslash\,#2
            }
            {
                #1\,\backslash\,#2
            }
            {
                #1\,\backslash\,#2
            }
    }
\newcommand{\rquotient}[2]{
        \mathchoice
            {
                \text{\raise01ex\hbox{$#1$}\Big/\lower1ex\hbox{$#2$}}%
            }
						{
                #1\,/\,#2
            }
            {
                #1\,/\,#2
            }
            {
                #1\,/\,#2
            }
    }
\newcommand{\lrquotient}[3]{
        \mathchoice
            {
                \text{\lower1ex\hbox{$#1$}\Big \backslash \raise01ex\hbox{$#2$}\Big/\lower1ex\hbox{$#3$}}%
            }
            {
                #1\,\backslash\,#2\,/\,#3
            }
            {
                #1\,\backslash\,#2\,/\,#3
            }
            {
                #1\,\backslash\,#2\,/\,#3
            }
    }
\newcommand{\sm}{\left(\begin{smallmatrix}}
\newcommand{\esm}{\end{smallmatrix}\right)}
\newcommand{\bpm}{\begin{pmatrix}}
\newcommand{\ebpm}{\end{pmatrix}}
\newcommand{\primesum}{\sideset{}{'}\sum}
\numberwithin{equation}{section}
\begin{document}
\selectlanguage{english}

\bibliographystyle{plain}

\title{Sup-norm of Hecke--Laplace eigenforms on $S^3$}
\author{Raphael S. Steiner}
\address{Institute for Advanced Study -- FH 317, 1 Einstein Drive, Princeton NJ 08540, USA}%
\email{raphael.steiner.academic@gmail.com}%


\subjclass[2010]{11F72 (11F27, 58G25) }
\keywords{Sup-norm, Hecke, Laplace, eigenforms, arithmetic manifold, fourth moment}


\begin{abstract} We prove sub-convex bounds on the fourth moment of Hecke--Laplace eigenforms on $S^3$. As a corollary, we get a Burgess-type sub-convex bound on the sup-norm of an individual eigenform. This constitutes an improvement over what is achievable through employing the Iwaniec--Sarnak amplifier.
\end{abstract}
\maketitle


\section{Introduction}

Let $X$ be a compact Riemannian manifold and denote by $\Delta$ the corresponding Laplace--Beltrami operator acting on functions on $X$. Given an eigenfunction $\phi$ of $\Delta$ with eigenvalue $-\lambda$ (i.e. $\Delta \phi + \lambda \phi=0$), it is a classical question to bound the sup-norm of $\phi$. In general (see for example \cite{SeegerSogge}), one has the bound
\begin{equation}
\|\phi\|_{\infty} \le C_X  \cdot (1+\lambda)^{\frac{\dim X-1}{4}} \|\phi\|_2,
\label{eq:convexitybound}
\end{equation}
for some constant $C_X$, which depends only on $X$. In this generality, one cannot do better as equality is attained for the spheres $S^n$. The main obstruction being that on the spheres the multiplicity $m_{\lambda}$ of an eigenvalue $\lambda$ is large. In fact, as large as $\lambda^{\frac{n-1}{2}}$, which combined with the lower bound (see for example \cite{SarnakLetterMora})

$$
m_{\lambda} \le \vol(X) \sup_{\substack{\Delta \phi + \lambda \phi =0}} \frac{\|\phi\|_{\infty}^2}{\|\phi\|_2^2},
$$
shows that \eqref{eq:convexitybound} is sharp. In contrast, for negatively curved Riemannian surfaces the multiplicities are expected to be small and it is conjectured in \cite{ArithmeticQC} that there one should have $\| \phi \|_{\infty} \ll_{\epsilon} (1+\lambda)^{\epsilon}$. Here and hereafter, we shall adopt Vinogradov's natation, i.e. an expression $F \ll_{\epsilon} G(\epsilon)$ shall mean for every sufficiently small $\epsilon>0$ we have $|F|\le C_{\epsilon} G(\epsilon)$ for some constant $C_{\epsilon}$, which may depend on $\epsilon$, on the whole domain on which the expression makes sense unless stated otherwise. However, even in this case, not much beyond the bound in \eqref{eq:convexitybound} is known. Only an extra factor of $\log(2+\lambda)$ has been saved over \eqref{eq:convexitybound}, i.e. the sup-norm is bounded by $(1+\lambda)^{\frac{1}{4}}/\log(2+\lambda)$. In a breakthrough paper \cite{IS95}, Iwaniec and Sarnak have demonstrated a new method to bound the sup-norm of certain arithmetic surfaces of negative curvature. They achieved the bound 
\begin{equation}
\|\phi\|_\infty \ll_{\epsilon} (1+\lambda)^{\frac{5}{24}+\epsilon}
\label{eq:ISbound}
\end{equation}
for Hecke--Laplace eigenforms $\phi$. This constitutes a power saving over the bound \eqref{eq:convexitybound}. Their method has been adopted by many in numerous other contexts. In particular, in relation to our result, we shall mention Vanderkam \cite{Vanderkam}, who extended their argument to the positively curved surface $S^2$, and Blomer--Michel \cite{BM2011, BM2013}, who not only considered the eigenvalue, but also the volume aspect of the sup-norm of certain arithmetic $d$-fold copies of $S^2$ and $S^3$, however they left the eigenvalue aspect of $S^3$ for future work, which has not appeared so far. 

Before we state our results, we shall introduce some notation. We shall identify a point $\vect{x}=(x_1,x_2,x_3,x_4)$ on $S^3$ with the quaternion $\vect{x}=x_1+x_2i+x_3j+x_4k$. This identifies $S^3$ with $B^1(\BR)$, the subspace of the quaternions with norm $1$. Here, we denoted by $B$ the standard quaternion algebra. We denote by $\overline{\vect{x}}=x_1-x_2i-x_3j-x_4k$ the conjugate of $\vect{x}$ and by $\tr(\vect{x})=\vect{x}+\overline{\vect{x}}$, $\nr(\vect{x})=\vect{x}\overline{\vect{x}}=\overline{\vect{x}}\vect{x}$ the reduced trace and reduced norm of $\vect{x}$, respectively. As in \cite{HeckeOpS2II}, we may define Hecke operators on $L^2(S^3)$ by
$$
(T_Nf)(\vect{x}) = \frac{1}{8} \sum_{\substack{\vect{m} \in B(\BZ) \\ \nr(\vect{m})=N}} f\left( \frac{\vect{m}}{\sqrt{N}} \vect{x} \right).
$$
These Hecke operators are self-adjoint, commute with each other, and commute with the Laplace operator and thus can be simultaneously diagonalised. Furthermore, the Hecke operators are multiplicative, i.e. they satisfy $T_M \circ T_N=T_{MN}$ for $(M,N)=1$, and satisfy $T_{p^{\alpha+1}}=T_{p^{\alpha}}\circ T_p-pT_{p^{\alpha-1}}$ for $p$ an odd prime and $\alpha\in \BN$. We call an eigenform of all the Hecke operators as well as the Laplace operator a Hecke--Laplace eigenform. 
For a set of eigenvalues $\vect{\lambda}=\{\lambda(1),\lambda(3),\lambda(5),\dots\}$ of the odd Hecke operators, we denote by $V_{\vect{\lambda}}$ the common eigenspace, i.e. $\phi \in V_{\vect{\lambda}} \Leftrightarrow \forall N \in \BN, N\text{ odd}: T_N \phi = \lambda(N) \phi$. We are now able to state our theorem.

\begin{thm} Let $\{\phi_j\}$ be an orthonormal basis of Hecke--Laplace eigenforms of the $(-n(n+2))$-Laplace eigenspace on $S^3$. Then, we have
$$
\sup_{\vect{x} \in S^3} \sum_{\substack{\vect{\lambda}\\ \lambda(1)=1}} \left( \sum_{\phi_j \in V_{\vect{\lambda}}} |\phi_j(\vect{x})|^2 \right)^2  \ll_{\epsilon} n^{3+\epsilon}.
$$
\label{thm:22moment}
\end{thm}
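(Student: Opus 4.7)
My plan is to exploit a symmetry that collapses the supremum over $\vect{x}$. Each Hecke operator $T_N$ commutes with the right translations $R_q : f \mapsto f(\cdot\, q)$, $q \in S^3$, because $\vect{m}(\vect{x}q) = (\vect{m}\vect{x})q$. Hence every common Hecke eigenspace $V_{\vect{\lambda}}$ is $R_q$-invariant, and since $R_q$ is unitary it maps any orthonormal basis of $V_{\vect{\lambda}}$ to another. Because the diagonal
$$F_{\vect{\lambda}}(\vect{x}) := \sum_{\phi_j \in V_{\vect{\lambda}}} |\phi_j(\vect{x})|^2$$
is the basis-independent diagonal of the reproducing kernel of $V_{\vect{\lambda}}$, applying this change of basis yields $F_{\vect{\lambda}}(\vect{x}q) = F_{\vect{\lambda}}(\vect{x})$ for every $q \in S^3$. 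The right action of $S^3$ on itself being transitive, $F_{\vect{\lambda}}$ is therefore \emph{constant} on $S^3$, equal to $\dim V_{\vect{\lambda}} / \vol(S^3)$. In particular the supremum over $\vect{x}$ is superfluous, and the statement reduces to the $\vect{x}$-free spectral estimate $\sum_{\vect{\lambda}} (\dim V_{\vect{\lambda}})^2 \ll n^{3+\epsilon}$.

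Next I would decompose $V_{-n(n+2)} \cong V^n \otimes W$ under the right $\SU{2}$-action, where $V^n$ is the $(n+1)$-dimensional irreducible representation and $W$ the $(n+1)$-dimensional multiplicity space. The odd Hecke algebra $\CH$ acts on $W$ and commutes with the $\SU{2}$-action; writing $m_{\vect{\lambda}} := \dim(W \cap V_{\vect{\lambda}})$ one has $\dim V_{\vect{\lambda}} = (n+1)\,m_{\vect{\lambda}}$. The target thus becomes $\sum_{\vect{\lambda}} m_{\vect{\lambda}}^2 \ll n^{1+\epsilon}$: a uniform bound on Hecke multiplicities on the $(n+1)$-dimensional $\CH$-module $W$, complementary to the identity $\sum_{\vect{\lambda}} m_{\vect{\lambda}} = \dim W = n+1$, i.e.\ a ``multiplicity one on average'' statement.

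The principal obstacle, which I anticipate to be the core difficulty, is the establishment of this multiplicity bound $\sum_{\vect{\lambda}} m_{\vect{\lambda}}^2 \ll n^{1+\epsilon}$. The cleanest conceptual route is via the Jacquet--Langlands correspondence, identifying $W$ together with its Hecke action with a space of weight-$(n+1)$ classical cuspforms of level dividing $2$, where strong multiplicity one together with a uniformly bounded $T_2$-oldform contribution yields the much stronger $\max_{\vect{\lambda}} m_{\vect{\lambda}} = O(1)$. A more self-contained, analytic approach would write $\sum_{\vect{\lambda}} m_{\vect{\lambda}}^2 = \dim \End_{\CH}(W)$ and express the right-hand side as a sum of traces of products $T_{N_1} \circ T_{N_2}$ restricted to $W$; the pre-trace formula on $S^3$ then recasts these traces as counts of pairs of integral quaternions of prescribed norms and traces, controllable by Ramanujan--Deligne bounds on Hecke eigenvalues combined with standard divisor-function estimates. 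The $\epsilon$-loss in the stated bound naturally arises from these counting arguments.
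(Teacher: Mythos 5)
Your opening observation is correct, and the paper does not use it. Since each $T_N$ is a normalised sum of \emph{left} translations, it commutes with every right translation $R_q$; the right translations also commute with $\Delta$, so each space $V_{\vect{\lambda}}\cap\ker(\Delta+n(n+2))$ is invariant under the unitary right regular action, and the diagonal of its reproducing kernel is constant because right multiplication is transitive on $S^3$. One can corroborate this with the paper's own objects: setting $\vect{x}=\vect{y}$ in \eqref{eq:Fn} gives $\tr(\vect{m}\vect{x}\overline{\vect{x}})=\tr(\vect{m})$, so $F_n(\vect{x},\vect{x};z)$ is literally independent of $\vect{x}$, and extracting first Fourier coefficients of its newform-isotypic projections recovers the constancy of $\sum_{\phi_j\in V_{\vect{\lambda}}}|\phi_j(\vect{x})|^2$. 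So the supremum is indeed vacuous and the theorem is equivalent to the multiplicity bound $\sum_{\vect{\lambda}}m_{\vect{\lambda}}^2\ll_{\epsilon}n^{1+\epsilon}$ on the $(n+1)$-dimensional left-Hecke module; your tensor decomposition and the identity $\dim V_{\vect{\lambda}}=(n+1)m_{\vect{\lambda}}$ are also fine.

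Where your proposal is incomplete is exactly where you flag the core difficulty, and your two suggested routes are not equally viable. Route A (Jacquet--Langlands/Eichler plus strong multiplicity one, giving $m_{\vect{\lambda}}=O(1)$) does work, but it is wholly deferred to external theorems and the bookkeeping at the prime $2$ needs care: the transfer lands in weight $n+2$ (not $n+1$) and level dividing $4$ (the paper's $\Gamma_0(4)$), one must bound the local dimension of invariants at $2$ for the Lipschitz order, and one should note that one-dimensional representations only interfere at $n=0$. Route B (recovering $\sum_{\vect{\lambda}}m_{\vect{\lambda}}^2$ from traces of $T_{N_1}\circ T_{N_2}$ with Deligne bounds) runs into precisely the obstruction the introduction identifies: isolating an eigensystem from such averages requires a \emph{lower} bound on $\sum_{m\le N}|\lambda(m)|^2$, which Deligne's upper bound does not give and which is known only for Eisenstein series and dihedral forms; as written this route would fail. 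By contrast, the paper's argument --- theta lift, Parseval, the Hoffstein--Lockhart lower bound on $|\widehat{h}(1)|^2$, and the lattice count of Proposition \ref{prop:Fnnorm} --- amounts to a proof of the averaged multiplicity bound that avoids invoking Jacquet--Langlands and, more importantly, is phrased so as to survive in settings (the hyperbolic quotients mentioned at the end of the introduction) where no transitive group action commutes with the Hecke operators and your reduction is unavailable. In short: your reduction is a genuine simplification for this particular theorem, Route A completes it modulo standard but unexecuted details, and Route B should be discarded.
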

One immediately gets the following two corollaries.
\begin{cor} Let $\{\phi_j\}$ be an orthonormal basis of Hecke--Laplace eigenforms of the $(-n(n+2))$-Laplace eigenspace on $S^3$. Then, we have
$$
\sup_{\vect{x} \in S^3} \primesum_{j} |\phi_j(\vect{x})|^4 \ll_{\epsilon} n^{3+\epsilon}.
$$
Here, the sum over $j$ is restricted to those Hecke--Laplace eigenforms with eigenvalue $1$ for the Hecke operator $T_1$.
\label{cor:4moment}
\end{cor}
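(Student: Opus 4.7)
The plan is to deduce this corollary directly from Theorem \ref{thm:22moment} by positivity, essentially without any analytic work. Since each $\phi_j$ in the basis is a joint eigenfunction of all odd Hecke operators, it lies in exactly one common eigenspace $V_{\vect{\lambda}}$, and the $T_1$-eigenvalue of $\phi_j$ equals $\lambda(1)$. Hence the restricted sum $\sideset{}{'}\sum_j |\phi_j(\vect{x})|^4$ partitions cleanly:
$$
\sideset{}{'}\sum_j |\phi_j(\vect{x})|^4 \;=\; \sum_{\substack{\vect{\lambda}\\ \lambda(1)=1}} \sum_{\phi_j \in V_{\vect{\lambda}}} |\phi_j(\vect{x})|^4.
$$

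The key (and only substantive) step is the elementary inequality $\sum_j a_j^2 \le \bigl(\sum_j a_j\bigr)^2$, valid for any finite collection of non-negative reals $a_j$, applied with $a_j = |\phi_j(\vect{x})|^2$ inside each fixed eigenspace $V_{\vect{\lambda}}$. This yields
$$
\sum_{\phi_j \in V_{\vect{\lambda}}} |\phi_j(\vect{x})|^4 \;\le\; \left( \sum_{\phi_j \in V_{\vect{\lambda}}} |\phi_j(\vect{x})|^2 \right)^{\!2}.
$$
Summing over $\vect{\lambda}$ with $\lambda(1)=1$ and then taking the supremum over $\vect{x} \in S^3$ produces precisely the left-hand side of Theorem \ref{thm:22moment}, giving the claimed bound $\ll_{\epsilon} n^{3+\epsilon}$.

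There is no real obstacle here; the corollary is a cost-free consequence of positivity combined with the Hecke decomposition. The only thing to confirm is that the partition of the orthonormal basis by common Hecke eigenvalues respects the restriction $T_1 \phi_j = \phi_j$, which holds by definition of $V_{\vect{\lambda}}$.
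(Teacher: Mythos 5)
Your proposal is correct and is exactly the intended deduction: the paper states Corollary \ref{cor:4moment} as an immediate consequence of Theorem \ref{thm:22moment}, and the positivity inequality $\sum_j a_j^2 \le \bigl(\sum_j a_j\bigr)^2$ applied within each eigenspace $V_{\vect{\lambda}}$ with $\lambda(1)=1$ is precisely the step being left implicit. Nothing is missing.
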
 
\begin{cor} For a Hecke--Laplace eigenform $\phi$ on $S^3$ with $T_1\phi=\phi$, we have
$$
\|\phi\|_{\infty} \ll_{\epsilon} (1+\lambda)^{\frac{3}{8}+\epsilon} \|\phi\|_2.
$$
\label{cor:indvsup}
\end{cor}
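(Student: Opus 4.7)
The plan is to deduce Corollary \ref{cor:indvsup} from the fourth moment bound in Corollary \ref{cor:4moment} by the standard positivity trick of dropping all terms but one. The heavy lifting has already been done in Theorem \ref{thm:22moment}; what remains is essentially a formal manipulation.

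Given a Hecke--Laplace eigenform $\phi$ on $S^3$ with $T_1\phi=\phi$ and Laplace eigenvalue $-\lambda=-n(n+2)$, I would first set $\tilde{\phi}=\phi/\|\phi\|_2$ and extend $\tilde{\phi}$ to an orthonormal basis $\{\phi_j\}$ of the $(-n(n+2))$-Laplace eigenspace, chosen so that every $\phi_j$ is itself a Hecke--Laplace eigenform with $T_1$-eigenvalue $1$. Such a basis exists: the operators $\{T_N : N \text{ odd}\}$ together with $\Delta$ form a commuting family of self-adjoint operators, so may be simultaneously diagonalised, and within the joint eigenspace containing $\tilde{\phi}$ (restricted to $T_1$-eigenvalue $1$) one is free to take $\tilde{\phi}$ itself as one of the basis vectors.

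By positivity of each summand and Corollary \ref{cor:4moment},
$$
|\tilde{\phi}(\vect{x})|^4 \;\le\; \primesum_j |\phi_j(\vect{x})|^4 \;\ll_{\epsilon}\; n^{3+\epsilon},
$$
uniformly in $\vect{x}\in S^3$. Taking fourth roots and multiplying by $\|\phi\|_2$ yields $|\phi(\vect{x})|\ll_{\epsilon} n^{3/4+\epsilon}\|\phi\|_2$. Since $\lambda=n(n+2)\ge n^2$, we have $n\le \lambda^{1/2}\le (1+\lambda)^{1/2}$, hence $n^{3/4}\le (1+\lambda)^{3/8}$, giving the claim.

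There is really no obstacle here; the one subtle point worth recording is the extension of $\tilde\phi$ to an orthonormal basis of the full Laplace eigenspace consisting of joint Hecke--Laplace eigenforms with $T_1$-eigenvalue $1$, which is guaranteed by the self-adjointness and mutual commutativity of $\Delta$ and the odd Hecke operators $T_N$ (together with the fact that $\phi$ already satisfies $T_1\phi=\phi$, so that we land in the subspace over which the sum in Corollary \ref{cor:4moment} runs).
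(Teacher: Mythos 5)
Your proposal is correct and is precisely the deduction the paper has in mind when it says the corollary follows immediately from Corollary \ref{cor:4moment}: drop all but one term by positivity, take fourth roots, and use $n \asymp (1+\lambda)^{1/2}$. The point you flag about extending $\phi/\|\phi\|_2$ to an orthonormal basis of joint eigenforms with $T_1$-eigenvalue $1$ is the right (and only) detail to check, and your justification via simultaneous diagonalisation of the commuting self-adjoint family is exactly what the paper relies on.
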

A few remarks are in order. Theorem \ref{thm:22moment} is sharp by Cauchy--Schwarz and the pre-trace formula \eqref{eq:pretrace} as there are about $n$ values of $\vect{\lambda}$ such that $\# \{\phi_j \in V_{\vect{\lambda}} \}>0$. Corollary \ref{cor:4moment}, although not optimal, marks a significant improvement over the trivial bound. Indeed, it marks the halfway point between the trivial bound $n^4$ and the lower bound of $n^2$. One might speculate whether the fourth moment can be as small as $n^{2+\epsilon}$. Corollary \ref{cor:indvsup} marks a Burgess-type sub-convexity bound for the sup-norm. In particular it constitutes an improvement over what is achievable through employing the amplifier of Iwaniec--Sarnak \cite{IS95} as the latter would only yield an exponent of $\frac{5}{12}$.

The idea behind Theorem \ref{thm:22moment} is rather simple. We take the pre-trace formula \eqref{eq:pretrace}, attach a theta series to both sides, and use Parseval. In order to get to the left hand side of the inequality of Theorem \ref{thm:22moment}, we invoke a lower bound of the Petersson norm of an arithmetically normalised newform due to Hoffstein--Lockhart \cite{HoffLock}. The upper bound follows from a lattice point counting argument. Our argument may, in some sense, be seen as a realisation of a remark given in \cite{IS95}. They remark that their result can be improved to the same qualitative bound as given here (Corollary \ref{cor:indvsup}) if one has a good lower bound on
$$
\sum_{m \le N} |\lambda(m)|^2.
$$
To the best of the author's knowledge, this has only been achieved for the Eisenstein series, see \cite{SupEisenstein},\cite{HuangXu}, and dihedral Maass forms \cite{supdihedral}. In our approach, we circumnavigate this issue by replacing the above sum with the residue at $s=1$ of the Rankin--Selberg convolution, which, here, is proportional to the Petersson norm of the corresponding theta series. Additionally, we shall point out a fundamental difference in the two methods. Their argument only gives a bound on a single form whereas our argument yields a bound on the fourth moment over a whole family.



There is potential to improve upon Corollary \ref{cor:4moment} by also considering the Hecke operators acting on the right. Furthermore, it appears that this idea of using the theta lift to double the moment is a fairly general one and can be used to tackle various (sup-)norm problems, subject to being able to deal with the geometric side. In upcoming work with I. Khayutin, we shall extend the technique to holomorphic forms on arithmetic hyperbolic surfaces. Additionally, as the author retrospectively has found, this idea can and has been used by Nelson \cite{NelsonQV1,NelsonQV2,NelsonQV3} as a starting point for estimating/evaluating the Quantum Variance.

\begin{acknowledgements} I would like to thank Shai Evra, Djordje Mili\'cevi\'c, and Peter Sarnak for interesting discussions on this and related topics. I would further like to thank the referees for their detailed remarks, which have improved the overall quality of the paper.

This work started during my Ph.D. at the University of Bristol and was completed at the Institute for Advanced Study in Princeton, where I am currently supported by the National Science Foundation Grant No. DMS -- 1638352 and the Giorgio and Elena Petronio Fellowship Fund II.
\end{acknowledgements}

\section{Proof}

As in the theorem, we shall denote by $\{\phi_j\}$ an orthonormal basis of Hecke--Laplace eigenforms of the $(-n(n+2))$-Laplace eigenspace, where $n \in \BN$. We have the following pre-trace formula on $S^3$
\begin{equation}
\frac{1}{n+1} \sum_j \phi_j(\vect{x}) \overline{\phi_j(\vect{y})} = U_n\left(\tfrac{1}{2}\tr (\vect{x}\overline{\vect{y}})\right),
\label{eq:pretrace}
\end{equation}
where $U_n$ is the $n$-th Chebyshev polynomial of the second kind, given by
\begin{equation}
U_n(\cos(\theta))= \frac{\sin((n+1)\theta)}{\sin(\theta)}.
\label{eq:Un}
\end{equation}
This may be easily deduced from the fact, that $\frac{1}{n+1}U_n(\cos(\theta))$ is the unique (normalised) zonal spherical function of degree $n$ (cf. \cite[Eq. (6.26)]{thesis}). 
We may restrict ourselves now to even integers $n$ as otherwise \eqref{eq:pretrace} shows that $T_1$ identically vanishes as $U_n$ is an odd function for $n$ odd. We shall now consider the following theta series for $z$ in the upper half-plane $\BH$:
\begin{align}
F_n(\vect{x},\vect{y};z) &= \sum_{\vect{m} \in B(\BZ)} \nr(\vect{m})^{\frac{n}{2}} U_n \left( \frac{1}{2 \sqrt{\nr(\vect{m})}} \tr(\vect{m}\vect{x}\overline{\vect{y}})  \right) e(\nr(\vect{m})z), \label{eq:Fn}\\
\Phi_j(z) &= \sum_{N \ge 1}  \lambda_j(N) N^{\frac{n}{2}} e\left( N z\right) \label{eq:Phij}.
\end{align}
We shall suppress the dependence on $\vect{x},\vect{y}$ in $F_n$ and write $F_n(z)=F_n(\vect{x},\vect{y};z)$ for short. It follows from \cite[Ch. 10]{IwClassic} that both $F_n$ and $\Phi_j$ are cusp forms on $\Gamma_0(4)$ of weight $n+2$ and trivial multiplier system. A bit more is true. $G_n(z)=F_n(\frac{z}{2})$ is a cusp form for the theta subgroup $\Gamma_{\theta}=\{\gamma \in \SL{2}(\BZ) |\gamma \equiv \sm 1 & 0 \\ 0 & 1 \esm\text{ or } \sm 0 & 1 \\ 1 & 0 \esm \mod(2) \}$ of weight $n+2$ and the multiplier system $\upsilon_{\theta}$ which takes on the value $1$ or $-1$ depending whether $\gamma \equiv \sm 1 & 0 \\ 0 & 1 \esm \mod(2)$ or not. For later use, we shall also require the Fourier expansion of $G_n(z)$ at the cusp $1$. It is given by
\begin{equation}
\left(G_n|_{n+2} \sm 1 & -1 \\ 1 & 0 \esm\right)(z) = - \!\! \! \!\sum_{\vect{m} \in B(\BZ)+\vect{\xi}}\nr(\vect{m})^{\frac{n}{2}} U_n \left( \frac{1}{2 \sqrt{\nr(\vect{m})}} \tr(\vect{m}\vect{x}\overline{\vect{y}})  \right) e\left(\tfrac{1}{2}\nr(\vect{m})z\right),
\label{eq:cusp1}
\end{equation}
where $\vect{\xi}=\frac{1}{2}(1+i+j+k)$. Furthermore, $\Phi_j(z)$ are Hecke eigenforms due to the multiplicative nature of its Fourier coefficients. The pre-trace formula \eqref{eq:pretrace} now implies
\begin{equation}
\frac{8}{n+1} \sum_j \phi_j(\vect{x})\overline{\phi_j(\vect{y})} \Phi_j(z) = F_n(z).
\label{eq:maineq}
\end{equation}
We set $\vect{x}=\vect{y}$ and wish to compute $\|\tfrac{n+1}{8}F_n\|^2$ in two ways. For this endeavour we require an orthonormal basis of Hecke eigenforms of the space of cusp forms on $\Gamma_0(4)$ of weight $n+2$. Such a basis has been computed by Blomer--Mili\'cevi\'c \cite{BlomONB}\footnote{Corrections can be found at http://www.uni-math.gwdg.de/blomer/corrections.pdf.}:
$$
\bigcup_{\substack{l|4}}\bigcup_{\substack{h \in \CB(l)}}\left\{ h_d(z)= \sum_{e|d}\xi_{h,d}(e)\cdot h|_{n+2} \sm \sqrt{e} & 0 \\ 0 & 1/\sqrt{e} \esm \Bigg | d | \tfrac{4}{l} \right \}.
$$
Here, $\CB(l)$ denotes the set of geometrically\footnote{with respect to the Petersson norm $\langle f,g \rangle = \int_{\lquotient{\Gamma_0(4)}{\BH}} f(z)\overline{g(z)}y^{n}dxdy$} normalised new forms $h$ of level $l$ with positive first Fourier coefficient $\widehat{h}(1)\in \BR^+$. Furthermore, $\xi_{h,d}(e)$ is some rather complicated arithmetic function, but we shall only require the bound
\begin{equation}
|\xi_{h,d}(e)| \ll_{\epsilon} d^{\epsilon}.
\label{eq:xibound}
\end{equation}
By Atkin--Lehner theory, each $\Phi_j$ corresponds to some newform $h$ of level $l|4$. Let $\vect{\lambda}_h$ denote the set of the odd Hecke eigenvalues of $h$. For each $j$ such that $\phi_j \in V_{\vect{\lambda}_h}$, we have an equality
\begin{equation}
\Phi_j(z) = \sum_{d| \frac{4}{l}} \eta_{j,d} h_d(z).
\label{eq:oldsplit}
\end{equation}
We are now able to compute $\|\tfrac{n+1}{8}F_n\|^2$ using its spectral expansion \eqref{eq:maineq}. Upon recalling $\vect{x}=\vect{y}$, we find it to equal
\begin{equation}\begin{aligned}
\sum_{l|4}\sum_{\substack{h \in \CB(l)}} \left \| \sum_{\phi_j \in V_{\vect{\lambda}_{h}}}  |\phi_j(\vect{x})|^2 \Phi_j \right \|^2 &=\sum_{l|4} \sum_{\substack{h \in \CB(l)}} \sum_{d | \frac{4}{l}} \left| \sum_{\phi_j \in V_{\vect{\lambda}_{h}}}  |\phi_j(\vect{x})|^2 \eta_{j,d} \right|^2 \\ 
&\ge \sum_{l|4} \sum_{\substack{h \in \CB(l)}} \frac{ \displaystyle \left|\sum_{\phi_j \in V_{\vect{\lambda}_{h}}}  |\phi_j(\vect{x})|^2 \sum_{d|\frac{4}{l}}\eta_{j,d} \xi_{h,d}(1) \right|^2}{\displaystyle\sum_{d|\frac{4}{l}} |\xi_{h,d}(1)|^2 } \\
&= \sum_{l|4} \sum_{\substack{h \in \CB(l)}} \frac{\displaystyle \left(\sum_{\phi_j \in V_{\vect{\lambda}_{h}}}  |\phi_j(\vect{x})|^2 \right)^2}{\displaystyle |\widehat{h}(1)|^2 \sum_{d|\frac{4}{l}} |\xi_{h,d}(1)|^2 } \\
& \gg_{\epsilon} (4\pi)^{-n} \Gamma(n+2) n^{-\epsilon} \sum_{\substack{\vect{\lambda}\\\lambda(1)=1}} \left( \sum_{\phi_j \in V_{\vect{\lambda}}}  |\phi_j(\vect{x})|^2 \right)^2.
\label{eq:eigenspacenorm}
\end{aligned}\end{equation}
In the above deduction, we have used Cauchy--Schwarz, the equality of the first Fourier coefficient in \eqref{eq:oldsplit}, i.e. $\widehat{h}(1)\sum_{d|\frac{4}{l}} \eta_{j,d}\xi_{h,d}(1)=1$, \eqref{eq:xibound}, and the Hoffstein--Lockhart \cite{HoffLock} upper bound on $|\widehat{h}(1)|^2$. It remains to bound the Petersson norm of $F_n$ geometrically. The following proposition suffices to conclude Theorem \ref{thm:22moment}.

\begin{prop} Let $n > 0 $ be an even integer. Then, we have
$$
\|F_n\|^2 
\ll_{\epsilon} (4 \pi)^{-n} \Gamma(n+2) \cdot n^{1+\epsilon}.
$$
\label{prop:Fnnorm}
\end{prop}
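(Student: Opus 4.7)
The plan is to compute $\|F_n\|^2$ via the Rankin--Selberg unfolding method, which rewrites the Petersson norm as the residue of a Dirichlet series in the Fourier coefficients of $F_n$; the residue is then estimated as a lattice-point count over pairs of integer quaternions of equal norm, weighted by products of Chebyshev values.

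First, I would integrate $|F_n(z)|^2 y^n$ against the Eisenstein series $E_\infty(z,s)$ at the cusp $\infty$ of $\Gamma_0(4)$ and unfold to the strip $\Gamma_\infty\backslash\BH=[0,1]\times(0,\infty)$. Parseval on the $x$-integral against the Fourier expansion of $F_n$ at $\infty$ produces
\[\int_{\Gamma_0(4)\backslash\BH}|F_n|^2 y^n E_\infty(z,s)\,dxdy=\frac{\Gamma(n+s+1)}{(4\pi)^{n+s+1}}\sum_{N\ge 1}\frac{|\widehat{F_n}(N)|^2}{N^{n+s+1}}.\]
Taking residues at $s=1$ on both sides, where the left side picks up $\Res_{s=1}E_\infty(z,s)=1/\vol(\Gamma_0(4)\backslash\BH)=1/(2\pi)$ times $\|F_n\|^2$, yields
\[\|F_n\|^2=\frac{2\pi\,\Gamma(n+2)}{(4\pi)^{n+2}}\cdot\Res_{s=1}\sum_{N\ge 1}\frac{|A(N)|^2}{N^{s+1}},\]
where $A(N)=\widehat{F_n}(N)/N^{n/2}=\sum_{\vect{m}\in B(\BZ),\,\nr(\vect{m})=N}U_n(\tr(\vect{m}\vect{x}\overline{\vect{y}})/(2\sqrt{N}))$. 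Thus the target $\|F_n\|^2\ll_\epsilon(4\pi)^{-n}\Gamma(n+2)n^{1+\epsilon}$ reduces to the residue estimate $\ll_\epsilon n^{1+\epsilon}$, which by a standard Tauberian argument follows from the partial-sum bound
\[\sum_{N\le X}|A(N)|^2=\sum_{\substack{\vect{m}_1,\vect{m}_2\in B(\BZ)\\ \nr(\vect{m}_1)=\nr(\vect{m}_2)\le X}}U_n(\theta_1)U_n(\theta_2)\ll_\epsilon n^{1+\epsilon}X^2\]
uniformly in $X\ge 1$, where $\theta_i$ is the angle appearing in the $i$-th Chebyshev factor.

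The final step is a geometric count of pairs of quaternions of equal norm in a ball, weighted by products of Chebyshev values. To organize this, I would pass to the sum-and-difference variables $\vect{u}=\vect{m}_1+\vect{m}_2$, $\vect{v}=\vect{m}_1-\vect{m}_2$: the equal-norm condition becomes the Euclidean orthogonality $\vect{u}\cdot\vect{v}=0$ together with $|\vect{u}|^2+|\vect{v}|^2=4N$, parametrizing the pairs by integer points on a codimension-one quadric in $\BZ^8$. The main obstacle is that the pointwise bound $|U_n|\le n+1$ is too wasteful: naively it yields $\ll n^2 X^{3+\epsilon}$, missing the target by a factor of $nX$. The needed extra power of $X$ must come from the mean-square cancellation of $U_n$ against the equidistributing lattice points on the spheres $\nr(\vect{m})=N$ (equivalently, from the orthogonality of $U_n$ to the constants on $S^3$); one extracts this by expanding $U_n(\theta_1)U_n(\theta_2)$ into lower-degree zonal harmonics via the addition formula on $S^3$ and estimating each resulting lattice sum on the quadric directly. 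The factor of $n$ is then recouped by using the $L^2$-norm rather than the sup-norm of $U_n$ on $S^3$, after which the residual $n^{1+\epsilon}$ slack absorbs all divisor and $\log$ losses.
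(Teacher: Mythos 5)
Your Rankin--Selberg set-up is a genuinely different route from the paper's, and as written it has a gap I do not see how to close. The residue at $s=1$ of $D(s)=\sum_N|A(N)|^2N^{-s-1}$ is governed by the behaviour of the partial sums $S(X)=\sum_{N\le X}|A(N)|^2$ as $X\to\infty$, not in any finite range: writing $D(s)=(s+1)\int_1^\infty S(t)\,t^{-s-2}\,dt$ shows that only the $t\to\infty$ regime feeds into $\lim_{s\to1^+}(s-1)D(s)$. (Incidentally, the stated uniform bound $S(X)\ll_\epsilon n^{1+\epsilon}X^2$ already fails at $X=1$, since the units $\pm1$ contribute $U_n(\pm1)=n+1$ each and so $|A(1)|^2\asymp n^2$; but the small-$X$ range is harmless for the residue.) The real problem is large $X$: the number of pairs $\vect{m}_1,\vect{m}_2$ with $\nr(\vect{m}_1)=\nr(\vect{m}_2)\le X$ is $\asymp X^3$, and $|U_n|$ has average size $\asymp 1$ on the sphere, so for $X\gg n$ no lattice-point count using only the pointwise bound on $U_n$ can produce $n^{1+\epsilon}X^2$ --- sign cancellation in $U_n$ is mandatory. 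You acknowledge this, but the proposed remedy (expand $U_n(\theta_1)U_n(\theta_2)$ into zonal harmonics and ``estimate each resulting lattice sum on the quadric directly'') is circular: those lattice sums are precisely the Weyl sums for equidistribution of lattice points on spheres against harmonics of degree up to $2n$, i.e.\ Fourier coefficients of theta series with harmonic polynomials, and bounding them uniformly in the degree is essentially the problem the proposition is meant to solve.

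The paper avoids this entirely by not unfolding: it bounds $\|F_n\|^2$ by the integral of $|G_n(z)|^2y^{n}$ over neighbourhoods $y\ge\sqrt{3}/2$ of the two cusps of $\Gamma_{\theta}$. There the factor $e^{-2\pi k y}$ with $y\ge\sqrt{3}/2$ truncates the Fourier expansion to $k\ll n$, and in that range the purely pointwise bound $|U_n(x)|\le\min\{n+1,(1-x^2)^{-1/2}\}$, combined with a second-moment count of quaternions in the cylinders $m_2^2+m_3^2+m_4^2\le\nr(\vect{m})/R^2$ via Minkowski's second theorem, gives $\sum_{k\le X}(\cdots)^2\ll_\epsilon X^{3+\epsilon}+n^2X^{\frac12+\epsilon}$, which suffices after partial summation. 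To salvage your approach you would have to truncate the unfolding (e.g.\ use an incomplete Eisenstein series) so that only $N\ll n$ contributes, at which point the argument collapses back to the paper's.
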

This estimate has already appeared in the author's thesis \cite{thesis} in a different context, but for the purpose of accessibility we shall reproduce the proof here. We shall require two proposisitons from the geometry of numbers.

\begin{prop}[Minkowski's second Theorem] Let $\CK\subseteq \BR^n$ be a closed convex $0$-symmetric set of positive volume. Let $\Lambda\subset \BR^n$ be a lattice and further let $ \lambda_1 \le \lambda_2 \le \dots \le \lambda_n$ be the successive minima of $\CK$ on $\Lambda$. Then, we have
$$
\frac{2^n}{n!} \vol(\BR^n/\Lambda) \le \lambda_1 \lambda_2 \cdots \lambda_n \vol(\CK) \le 2^n \vol(\BR^n/\Lambda).
$$
\label{prop:Mink2}
\end{prop}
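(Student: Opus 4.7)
Since this is a classical result due to Minkowski, the cleanest option would be to cite a standard reference (e.g.\ Cassels, \emph{An Introduction to the Geometry of Numbers}). If one were to prove it from scratch, the two inequalities have rather different character and I would treat them separately. The lower bound is essentially a volume computation, while the upper bound is the substantive content of the theorem.

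For the lower bound, my plan is to fix linearly independent lattice vectors $\vect{v}_1,\ldots,\vect{v}_n \in \Lambda$ realising the successive minima, i.e.\ $\vect{v}_i \in \lambda_i\overline{\CK}$. Then $\vect{v}_i/\lambda_i \in \CK$ and, by $0$-symmetry, $-\vect{v}_i/\lambda_i \in \CK$ as well, so convexity gives
$$
C := \mathrm{conv}\left(\pm \vect{v}_1/\lambda_1,\ldots,\pm\vect{v}_n/\lambda_n\right) \subseteq \CK.
$$
The body $C$ is a cross-polytope of volume $\frac{2^n}{n!}|\det(\vect{v}_1,\ldots,\vect{v}_n)|/(\lambda_1\cdots\lambda_n)$, and since $\vect{v}_1,\ldots,\vect{v}_n$ span a full-rank sublattice of $\Lambda$ one has $|\det(\vect{v}_1,\ldots,\vect{v}_n)|\geq \vol(\BR^n/\Lambda)$. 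Comparing volumes yields the lower bound.

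The upper bound is the main obstacle, since the naive route of applying Minkowski's first theorem to $\tfrac{\lambda_1}{2}\CK$ yields only $\lambda_1^n\vol(\CK)\leq 2^n\vol(\BR^n/\Lambda)$, which is too weak whenever the $\lambda_i$ are unequal. I would proceed by induction on $n$, roughly in the spirit of Davenport's classical argument. The inductive step compares the successive minima of $\CK$ on $\Lambda$ with those of the $(n-1)$-dimensional section $\CK\cap \mathrm{span}(\vect{v}_1,\ldots,\vect{v}_{n-1})$ on the sublattice it contains, together with a one-dimensional application of Minkowski's first theorem to the image of $\CK$ and of $\Lambda$ under the projection onto the line orthogonal to that span. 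A delicate point is to control precisely how much one loses in passing between the section and the projection, which is where the $2^n/n!$ factor in the lower bound and the $2^n$ factor in the upper bound diverge; managing this slack is the part of the argument requiring the most care and is the reason Minkowski's second theorem is considered genuinely nontrivial beyond his first theorem.
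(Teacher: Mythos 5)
The paper gives no proof of this proposition at all: it is stated as a classical black box (Minkowski's second theorem), so your primary recommendation of citing a standard reference such as Cassels is precisely the paper's own treatment. Your cross-polytope argument for the lower bound is correct and complete as written, while your upper-bound discussion is only an outline (Davenport-style induction) that would still require substantial work to carry out --- but since a citation suffices here, that incompleteness is immaterial.
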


\begin{prop} Let $\CK\subseteq \BR^n$ be a closed convex $0$-symmetric set of positive volume. Let $\Lambda\subset \BR^n$ be a lattice and further let $ \lambda_1 \le \lambda_2 \le \dots \le \lambda_n$ be the successive minima of $\CK$ on $\Lambda$. Then, we have
$$
|\CK \cap \Lambda| \le \prod_{i=1}^n \left( 1+\frac{2i}{\lambda_i} \right).
$$
\label{prop:latticepointcount}
\end{prop}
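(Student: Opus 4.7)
The plan is to prove Proposition~\ref{prop:latticepointcount} by induction on the dimension $n$. For $n=0$ both sides equal $1$, so the statement holds trivially.

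For the inductive step, let $v_1,\dots,v_n \in \Lambda$ be linearly independent with $v_i \in \lambda_i \CK$, realizing the successive minima. A standard reduction lets us assume $v_1$ is primitive in $\Lambda$: otherwise, writing $v_1 = m w$ with $w \in \Lambda$ primitive and $m \ge 2$ produces a vector $w \in (\lambda_1/m)\CK$ contradicting the minimality of $\lambda_1$. Consider the projection $\pi: \BR^n \to \BR^n/\BR v_1 \cong \BR^{n-1}$; the image $\pi(\CK)$ is $0$-symmetric convex and $\pi(\Lambda)$ is a lattice of rank $n-1$. Decomposing into fibers,
$$
|\CK \cap \Lambda| \;=\; \sum_{\bar w \in \pi(\CK)\cap\pi(\Lambda)} \left|\pi^{-1}(\bar w)\cap \CK \cap \Lambda\right|.
$$
Each fiber of $\CK$ is a convex segment on an affine line parallel to $\BR v_1$. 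By Brunn's concavity theorem the length is maximized at $\bar w = 0$, where it is exactly $2|v_1|/\lambda_1$ (anything longer would give a lattice point along $\BR v_1$ of $\CK$-norm strictly less than $\lambda_1$). Since $v_1$ generates $\Lambda \cap \BR v_1$, each fiber contains at most $1 + 2/\lambda_1$ lattice points.

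Applying the induction hypothesis to the pair $(\pi(\CK), \pi(\Lambda))$, whose successive minima I denote $\lambda'_1 \le \dots \le \lambda'_{n-1}$, yields
$$
|\CK \cap \Lambda| \;\le\; \left(1+\frac{2}{\lambda_1}\right)\prod_{i=1}^{n-1}\left(1+\frac{2i}{\lambda'_i}\right).
$$
To match this with $\prod_{i=1}^n (1 + 2i/\lambda_i)$, it suffices to establish the inequality $\lambda'_i \ge \frac{i}{i+1}\lambda_{i+1}$ for every $i = 1, \dots, n-1$, which is equivalent to $1 + 2i/\lambda'_i \le 1 + 2(i+1)/\lambda_{i+1}$.

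The main obstacle is exactly this last inequality on the projected successive minima. Naively one only has $\lambda'_i \le \lambda_{i+1}$ (via $\pi(v_{i+1})$), which goes the wrong way, so the slack factor $\frac{i}{i+1}$ built into the product must be extracted by hand. The expected argument: any $i$ linearly independent vectors $\bar w_1,\dots,\bar w_i \in \pi(\Lambda)$ with $\bar w_k \in \lambda'_i \pi(\CK)$ lift to $w_1,\dots,w_i \in \Lambda$ (unique modulo $\BR v_1$), and adding suitable integer multiples of $v_1$ produces $i+1$ linearly independent lattice vectors whose $\CK$-norms are bounded in terms of $\lambda'_i$ and $\lambda_1$; invoking the definition of $\lambda_{i+1}$ then yields the desired lower bound on $\lambda'_i$, with the factor $\frac{i}{i+1}$ arising from the range of freedom in choosing the lifts.
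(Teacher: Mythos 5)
The paper itself does not prove this proposition; its ``proof'' is the citation \cite{LattCount}. Your attempt must therefore stand on its own, and it does not: the inequality $\lambda'_i \ge \frac{i}{i+1}\lambda_{i+1}$, which you correctly flag as the main obstacle and only sketch, is false. The lifting argument you outline gives, after translating each lift by an integer multiple of $v_1$ so that the residual component lies in $[-\tfrac12,\tfrac12]v_1\subseteq \tfrac{\lambda_1}{2}\CK$, only the bound $\lambda_{i+1}\le \lambda'_i+\tfrac{\lambda_1}{2}$, hence $\lambda'_i\ge \tfrac12\lambda_{i+1}$. The ``range of freedom in choosing the lifts'' is one period of $v_1$ regardless of $i$, so no factor better than $\tfrac12$ can be extracted this way, whereas you need $\tfrac{i}{i+1}$, which is strictly stronger for every $i\ge 2$.

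Concretely, take $n=3$, $\Lambda=\BZ^3$, and let $\CK$ be the convex hull of $\pm(1,0,0)$, $\pm(1,2,0)$, $\pm(1,0,2)$, whose gauge is $\|(x,y,z)\|_{\CK}=|x-\tfrac{y+z}{2}|+\tfrac{|y|}{2}+\tfrac{|z|}{2}$. One checks $\lambda_1=\lambda_2=\lambda_3=1$ (any nonzero integer vector has gauge $\ge 1$, with equality at $(1,0,0)$, $(0,1,0)$, $(0,0,1)$), and $v_1=(1,0,0)$ is a legitimate, primitive choice of a first-minimum vector. Projecting along it gives $\pi(\CK)=\{(y,z):|y|+|z|\le 2\}$ and $\pi(\Lambda)=\BZ^2$, so $\lambda'_1=\lambda'_2=\tfrac12<\tfrac23=\tfrac23\lambda_3$. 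Worse, the inductive step collapses outright in this example: your bound $\bigl(1+\tfrac{2}{\lambda_1}\bigr)\prod_{i=1}^{2}\bigl(1+\tfrac{2i}{\lambda'_i}\bigr)=3\cdot 5\cdot 9=135$ exceeds the target $\prod_{i=1}^{3}\bigl(1+\tfrac{2i}{\lambda_i}\bigr)=3\cdot 5\cdot 7=105$, so no rearrangement of the termwise comparison can rescue it. (Projecting along $(0,1,0)$ instead happens to work here, but you give no criterion for selecting $v_1$ among the minimizers and no argument that a good choice always exists.) The sound parts of your write-up --- primitivity of $v_1$, the central-chord/Brunn bound of $1+2/\lambda_1$ lattice points per fibre, and the reduction to $(\pi(\CK),\pi(\Lambda))$ --- are fine, but the mechanism that makes the $i$-th factor grow like $2i/\lambda_i$ must come from somewhere else; that is precisely what the argument in \cite{LattCount} supplies.
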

\begin{proof} See \cite[Prop. 2.1]{LattCount}.
\end{proof}

\begin{proof}[Proof of Proposition \ref{prop:Fnnorm}] We have
$$\begin{aligned}
\int_{\lquotient{\Gamma_0(4)}{\BH}} |F_n(z)|^2 y^{n+2} \frac{dxdy}{y^2}  &= 2^{-n-2} \int_{\lquotient{\Gamma_0(4)}{\BH}} |F_n(z)|^2 (2y)^{n+2} \frac{dxdy}{y^2} \\
 &= 2^{-n-2}  \int_{\lquotient{\Gamma(2)}{\BH}} |F_n(\tfrac{z}{2})|^2 y^{n+2} \frac{dxdy}{y^2} \\
&= 2^{-n-1}\int_{\lquotient{\Gamma_{\theta}}{\BH}} |G_n(z)|^2 y^{n+2} \frac{dxdy}{y^2}.
\end{aligned}$$
We further bound the latter integral by
$$
\int_{\frac{\sqrt{3}}{2}}^{\infty} \int_0^2 |G_n(z)|^2 y^{n} dxdy + \int_{\frac{\sqrt{3}}{2}}^{\infty} \int_0^1 \left|\left(G_n|_{n+2}\sm 1 & -1 \\ 1 & 0 \esm \right)(z)\right|^2 y^{n} dxdy = \CI_1+\CI_2, \text{ say}.
$$
We shall only deal with $\CI_1$ as the same lattice point counting argument may also be applied to $\CI_2$ due to the very nature of the Fourier expansion of $G_n$ at the cusp $1$, see \eqref{eq:cusp1}. We insert the Fourier expansion \eqref{eq:Fn} and integrate over $x$. We find
$$\begin{aligned}
\CI_1 
&= \int_{\frac{\sqrt{3}}{2}}^{\infty} \sum_{k=1}^{\infty} k^n e^{-2\pi k y} \left( \sum_{\substack{\vect{m} \in B(\BZ) \\ \nr(\vect{m})=k}} U_n \left( \frac{1}{2\sqrt{\nr(\vect{m})}} \tr \left ( \vect{m} \right ) \right) \right)^2 y^n dy \\
& \le \int_{\frac{\sqrt{3}}{2}}^{\infty} \sum_{k=1}^{\infty} k^n e^{-2\pi k y} \left( \sum_{\substack{\vect{m} \in B(\BZ) \\ \nr(\vect{m})=k}} \min \left\{ n+1, \frac{\sqrt{\nr(\vect{m})}}{\sqrt{m_2^2+m_3^2+m_4^2}} \right \} \right)^2 y^n dy,
\end{aligned}$$
where we have made use of the bound $U_n(x) \le \min \{ n+1, (1-x^2)^{-\frac{1}{2}}\}$, which is easy to read off the definition \eqref{eq:Un}. We shall first deal with the contribution from $k \ge 10 n$. In this case, we may bound the sum over $k$ (without the factor $y^n$) by
$$\begin{aligned}
\ll n^2 \sum_{k \ge 10 n}k^{n+3}e^{-2\pi ky} &\ll n^2 \sum_{k \ge 10n} n^{n+3} ( \pi y)^{-n-3} e^{-n} e^{-\pi k y} \\
&\ll n^{n+5}( \pi e)^{-n} y^{-n-3} e^{-10 \pi n y}.
\end{aligned}$$
Hence, the contribution from $k \ge 10 n$ towards $\CI_1$ is bounded by
$$
n^{n+5} ( \pi e)^{-n} \int_{\frac{\sqrt{3}}{2}}^{\infty} e^{-10 \pi n y} y^{-3} dy \ll n^{n+5} ( \pi e)^{-n} e^{-10n}.
$$
This is sufficient. For $k \le 10n$, we interchange the integral and summation in $\CI_1$. We further extend the integral all the way down to $0$ and find that the contribution is at most
\begin{multline}
(2 \pi)^{-n-1} \Gamma(n+1) \sum_{k=1}^{10n} \frac{1}{k}  \left( \sum_{\substack{\vect{m} \in B(\BZ) \\ \nr(\vect{m})=k}} \min \left\{ n+1, \frac{\sqrt{\nr(\vect{m})}}{\sqrt{m_2^2+m_3^2+m_4^2}} \right \} \right)^2 \\
=(2 \pi)^{-n-1} \Gamma(n+1) \left(\frac{1}{10n} A(10n)+\int_1^{10n} A(x) \frac{dx}{x^2} \right),
\label{eq:smallkintermed}
\end{multline}
where
$$
A(X) = \sum_{1\le k \le X} \left( \sum_{\substack{\vect{m} \in B(\BZ) \\ \nr(\vect{m})=k}} \min \left\{ n+1, \frac{\sqrt{\nr(\vect{m})}}{\sqrt{m_2^2+m_3^2+m_4^2}} \right \} \right)^2.
$$
In order to bound $A(X)$, we cover the quaternions $\vect{m}$ by sets $C(R)$ with $R=2^i$, $i \in \BN_0$. They are defined as follows
$$
\vect{m} \in C(R) \Leftrightarrow m_2^2+m_3^2+m_4^2  \le \frac{\nr(\vect{m})}{R^2}.
$$
Fix a $k$ and consider all points $\vect{m}\in C(R)$ with $\nr(\vect{m})=k$. We have $|m_1|=\sqrt{k}(1+O(R^{-2}))$. Thus, there are $\ll 1+k^{\frac{1}{2}}/R^2$ choices for $m_1$ and for any such choice of $m_1$ there are $\ll_{\epsilon} 1+k^{\frac{1}{2}+\epsilon}/R$ choices for $(m_2,m_3,m_4)$ satisfying $\nr(\vect{m})=k$. Hence, we deduce
\begin{equation}
\left|\left\{\vect{m}\in B(\BZ)| \nr(\vect{m})=k \text{ and } \vect{m}\in C(R)\right\}\right| \ll_{\epsilon} \left(1+\frac{k^{\frac{1}{2}}}{R}+\frac{k}{R^3} \right)k^{\epsilon}.
\label{eq:singlebound}
\end{equation}
We are now going to refine this estimate as $k$ varies in an interval $[M,2M]$. In this case, we have the conditions
$$
m_1^2 \le 2M  \text{ and } m_2^2+m_3^2+m_4^2  \le \frac{\nr(\vect{m})}{R^2} \le \frac{2M}{R^2}.
$$
This defines a $0$-symmetric cylinder $\CK$. By Proposition \ref{prop:latticepointcount}, the number of integral quaternions $\vect{m}$ inside $\CK$ is bounded by
$$
\ll \frac{1}{\lambda_1}+\frac{1}{\lambda_1\lambda_2}+\frac{1}{\lambda_1\lambda_2\lambda_3}+\frac{1}{\lambda_1\lambda_2\lambda_3\lambda_4}.
$$
Clearly, we have $\lambda_1 \gg M^{-\frac{1}{2}}$ and $\lambda_1\lambda_2\lambda_3\lambda_4 \gg M^{-2}R^{3}$ by Proposition \ref{prop:Mink2}. We also claim $\lambda_1\lambda_2\gg M^{-1}R$ and $\lambda_1\lambda_2\lambda_3 \gg M^{-\frac{3}{2}}R^{2}$. Let us illustrate this for $\lambda_1\lambda_2$. Let $\vect{v_1},\vect{v_2}$ be two linearly independent vectors in $B(\BZ)$ for which the second successive minima is attained. Then, $\BZ\vect{v_1}+\BZ\vect{v_2}$ is a lattice with co-volume at least $1$. Furthermore, we have $\vol(\CK \cap (\BR\vect{v_1}+\BR\vect{v_2})) \ll MR^{-1}$, which may be deduced from a general Pythagorean theorem \cite{GenPythagoras}. Hence, by Proposition \ref{prop:Mink2}, we have $\lambda_1\lambda_2 \gg M^{-1}R$. The bound $\lambda_1\lambda_2\lambda_3 \gg M^{-\frac{3}{2}}R^{2}$ follows from the same considerations. Thus, we find 
\begin{equation}
\left|\left\{\vect{m}\in B(\BZ)| M \le \nr(\vect{m})\le 2M \text{ and } \vect{m}\in C(R)\right\}\right| \ll M^{\frac{1}{2}}+\frac{M^2}{R^3}.
\label{eq:intbound}
\end{equation}
For our convenience, let us denote $D(R)=C(R)\backslash C(2R)$. From Cauchy--Schwarz, it follows that
\begin{multline}
A(2M)-A(M) = \sum_{M < k \le 2M} \left( \sum_{\substack{\vect{m} \in B(\BZ) \\ \nr(\vect{m})=k}} \min \left\{ n+1, \frac{\sqrt{\nr(\vect{m})}}{\sqrt{m_2^2+m_3^2+m_4^2}} \right \} \right)^2 \\
\ll \sum_{M < k \le 2M} \left( \sum_{i=0}^{\lfloor \log_2(n) \rfloor} \mu_i +\mu  \right) \left( \sum_{i=0}^{\lfloor \log_2(n) \rfloor} \frac{2^{2i}}{\mu_i} \left(\sum_{\substack{\nr(\vect{m})=k \\ \vect{m} \in D(2^i)}}1\right)^2+\frac{n^2}{\mu} \left(\sum_{\substack{\nr(\vect{m})=k \\ \vect{m} \in C(n)}}1\right)^2  \right)
\label{eq:Adyadic}
\end{multline}
for some positive weights $\mu_i,\mu$, which we shall choose in due time. Equations \eqref{eq:singlebound} and \eqref{eq:intbound} imply
$$
\sum_{M < k \le 2M} \left(\sum_{\substack{\nr(\vect{m})=k \\ \vect{m} \in D(R)}}1\right)^2 \ll_{\epsilon} \left ( M^{\frac{1}{2}}+ \frac{M^{\frac{5}{2}}}{R^4} + \frac{M^3}{R^6} \right) M^{\epsilon}.
$$
Hence, for $M \ll n$, \eqref{eq:Adyadic} is further bounded by
\begin{multline}
\ll_{\epsilon} \left( \sum_{i=0}^{\lfloor \log_2(n) \rfloor} \mu_i +\mu  \right) \left(\sum_{i=0}^{\lfloor \frac{1}{4}\log_2(M)\rfloor} \frac{2^{2i}}{\mu_i} \frac{M^{3+\epsilon}}{2^{6i}} + \sum_{i=\lfloor \frac{1}{4}\log_2(M)\rfloor+1}^{\lfloor \frac{1}{2} \log_2(M) \rfloor} \frac{2^{2i}}{\mu_i} \frac{M^{\frac{5}{2}+\epsilon}}{2^{4i}} \right) \\
+\left( \sum_{i=0}^{\lfloor \log_2(n) \rfloor} \mu_i +\mu  \right) \left ( \sum_{i=\lfloor \frac{1}{2}\log_2(M)\rfloor+1}^{\lfloor \log_2(n) \rfloor} \frac{2^{2i}}{\mu_i} M^{\frac{1}{2}+\epsilon} +  \frac{n^2}{\mu} M^{\frac{1}{2}+\epsilon} \right).
\label{eq:Adyadic2}
\end{multline}
We make the following choices for the weights: $\mu=n\cdot M^{\frac{1}{4}}$ and
$$
\mu_i = \begin{cases} M^{\frac{3}{2}}2^{-2i}, & 0\le i \le \lfloor\tfrac{1}{4} \log_2(M)\rfloor, \\
M^{\frac{5}{4}}2^{-i}, & \lfloor\tfrac{1}{4} \log_2(M)\rfloor < i \le \lfloor\tfrac{1}{2}\log_2(M)\rfloor, \\
M^{\frac{1}{4}} 2^i, & \lfloor\tfrac{1}{2}\log_2(M)\rfloor < i \le \lfloor \log_2(n) \rfloor. \end{cases}
$$
It follows that for $M \ll n$ we have
$$
A(2M)-A(M) \ll_{\epsilon} M^{3+\epsilon}+n^2M^{\frac{1}{2}+\epsilon}
$$
and hence $A(X) \ll_{\epsilon} X^{3+\epsilon}+n^2X^{\frac{1}{2}+\epsilon}$ for $X \ll n$ which when combined with \eqref{eq:smallkintermed} concludes the proposition.

\end{proof}

\bibliography{RafBib}

\begin{thebibliography}{10}

\bibitem{LattCount}
U.~Betke, M.~Henk, and J.~M. Wills.
\newblock Successive-minima-type inequalities.
\newblock {\em Discrete Comput. Geom.}, 9(2):165--175, 1993.

\bibitem{BM2011}
V.~Blomer and P.~Michel.
\newblock Sup-norms of eigenfunctions on arithmetic ellipsoids.
\newblock {\em Int. Math. Res. Not. IMRN}, (21):4934--4966, 2011.

\bibitem{BM2013}
V.~Blomer and P.~Michel.
\newblock Hybrid bounds for automorphic forms on ellipsoids over number fields.
\newblock {\em J. Inst. Math. Jussieu}, 12(4):727--758, 2013.

\bibitem{BlomONB}
V.~Blomer and D.~Mili\'cevi\'c.
\newblock The second moment of twisted modular {$L$}-functions.
\newblock {\em Geom. Funct. Anal.}, 25(2):453--516, 2015.

\bibitem{GenPythagoras}
D.~R. Conant and W.~A. Beyer.
\newblock Generalized {P}ythagorean theorem.
\newblock {\em Amer. Math. Monthly}, 81:262--265, 1974.

\bibitem{HoffLock}
J.~Hoffstein and P.~Lockhart.
\newblock Coefficients of {M}aass forms and the {S}iegel zero.
\newblock {\em Ann. of Math. (2)}, 140(1):161--181, 1994.
\newblock With an appendix by Dorian Goldfeld, Hoffstein and Daniel Lieman.

\bibitem{supdihedral}
B.~Huang.
\newblock {S}up-{N}orm and {N}odal {D}omains of {D}ihedral {M}aass {F}orms.
\newblock {\em Communications in Mathematical Physics}, Jan 2019.

\bibitem{HuangXu}
B.~Huang and Z.~Xu.
\newblock Sup-norm bounds for {E}isenstein series.
\newblock {\em Forum Math.}, 29(6):1355--1369, 2017.

\bibitem{IwClassic}
H.~Iwaniec.
\newblock {\em Topics in classical automorphic forms}, volume~17 of {\em
  Graduate Studies in Mathematics}.
\newblock American Mathematical Society, Providence, RI, 1997.

\bibitem{IS95}
H.~Iwaniec and P.~Sarnak.
\newblock {$L^\infty$} norms of eigenfunctions of arithmetic surfaces.
\newblock {\em Ann. of Math. (2)}, 141(2):301--320, 1995.

\bibitem{HeckeOpS2II}
A.~Lubotzky, R.~Phillips, and P.~Sarnak.
\newblock Hecke operators and distributing points on {$S^2$}. {II}.
\newblock {\em Comm. Pure Appl. Math.}, 40(4):401--420, 1987.

\bibitem{NelsonQV1}
P.~D. Nelson.
\newblock Quantum variance on quaternion algebras, {I}.
\newblock {\em {P}reprint}, 2016.
\newblock {\tt arXiv:1601.02526}.

\bibitem{NelsonQV2}
P.~D. Nelson.
\newblock Quantum variance on quaternion algebras, {II}.
\newblock {\em {P}reprint}, 2017.
\newblock {\tt arXiv:1702.02669}.

\bibitem{NelsonQV3}
P.~D. Nelson.
\newblock Quantum variance on quaternion algebras, {III}.
\newblock {\em {P}reprint}, 2019.
\newblock {\tt arXiv:1903.08686}.

\bibitem{ArithmeticQC}
P.~Sarnak.
\newblock Arithmetic quantum chaos.
\newblock In {\em The {S}chur lectures (1992) ({T}el {A}viv)}, volume~8 of {\em
  Israel Math. Conf. Proc.}, pages 183--236. Bar-Ilan Univ., Ramat Gan, 1995.

\bibitem{SarnakLetterMora}
P.~Sarnak.
\newblock Letter to {C}. {M}orawetz on bounds for eigenfunctions on symmetric
  spaces, 2004.
\newblock Available at https://publications.ias.edu/sarnak.

\bibitem{SeegerSogge}
A.~Seeger and C.~D. Sogge.
\newblock Bounds for eigenfunctions of differential operators.
\newblock {\em Indiana Univ. Math. J.}, 38(3):669--682, 1989.

\bibitem{thesis}
R.~S. Steiner.
\newblock The harmonic conjunction of automorphic forms and the
  {H}ardy--{L}ittlewood circle method.
\newblock {\em Ph.D. thesis, University of Bristol}, 2018.

\bibitem{Vanderkam}
J.~M. VanderKam.
\newblock {$L^\infty$} norms and quantum ergodicity on the sphere.
\newblock {\em Internat. Math. Res. Notices}, (7):329--347, 1997.

\bibitem{SupEisenstein}
M.~P. Young.
\newblock A note on the sup norm of {E}isenstein series.
\newblock {\em The {Q}uarterly {J}ournal of {M}athematics}, page hay019, 2018.

\end{thebibliography}
\end{document}